\newcommand{\dsum}{\displaystyle\sum}
\newcommand{\tsum}{\textstyle\sum}
\newcommand{\xu}{\underline{x}}
\newcommand{\xo}{\overline{x}}
\newcommand{\xL}{\underline{x}_k^o}
\newcommand{\SPR}{SPR compatible\xspace}
\newcommand{\PBS}[1]{\let\temp=\\#1\let\\=\temp}
\newlength{\tmplength} \settowidth{\tmplength}{XXXXXXXXXXXXXXXXX}
\newtheorem{lemma}{Lemma}
\newtheorem{theorem}{Theorem}
\newtheorem*{theorem*}{Theorem}
\newtheorem{definition}{Definition}
\newtheorem{assumption}{Assumption}
\title{\bfseries \Huge A more general Pandora's rule?}
\author{Wojciech Olszewski\thanks{Department of Economics,
    Northwestern University, USA} \and Richard Weber\thanks{Statistical
    Laboratory, University of Cambridge, UK}}
\date{}
\begin{document}

\maketitle

\begin{abstract} 
\indent In a classic model analysed by Weitzman an agent is
presented with boxes containing prizes. She may open boxes in any
order, discover prizes within, and optimally stop. She wishes to
maximize the expected value of the greatest prize found, minus costs
of opening boxes. The problem is solved by a so-called Pandora's
rule, and has applications to searching for a house or job. However,
this does not model the problem of a student who searches for the
subject to choose as her major and benefits from all courses she
takes while searching.

So motivated, we ask whether there exist any problems for which a
generalized Pandora's rule is optimal when the objective is a more
general function of all the discovered prizes. We show that if a
generalized Pandora's rule is optimal for all specifications of
costs and prize distributions, then the objective function must take
a special form. We also explain how the Gittins index theorem can
be applied to an equivalent multi-armed bandit problem to prove
optimality of Pandora's rule for the student's problem. However, we
also show that there do exist some problems which are not of
multi-armed bandit type for which Pandora's rule is optimal.
  \medskip



\end{abstract}


\section{Weitzman's problem and its generalization}

\subsection{Hunting the best prize}
In a classic problem that was first analyzed by Weitzman 
\cite{Weit79} an agent called Pandora is presented with $n$ boxes,
each of which contains a prize. Pandora can, by paying a known cost
$c_{i}$, open box $i$ to reveal its prize. The nonnegative value of
the prize, denoted $x_{i}^{o}$, is not known until the box is opened,
but ex ante it has known distribution $F_{i}$. The superscript `o' is
provided as a mnemonic for `opened' or `observed'. Pandora wishes to
choose the order of opening the boxes, and when to stop opening, so as
to maximize the expected value of the greatest discovered prize, net
of the sum of the costs paid to open boxes. Weitzman's problem is
attractive for two reasons.  Firstly, it has an enormous number of
applications, such as to searching for a house, job, or research project
to conduct. A key feature is that it combines problems of scheduling
(in what order should the boxes be opened?) and stopping (when should
one be content to take the greatest prize found thus far?)

The second reason that the problem is attractive is that it has a remarkably
simple solution, which we now describe. Suppose there is just one unopened
box, say box $i$. However, there is also a {\itshape reservation prize}
already on the table, of value $y$ that may be taken at any time. It is
optimal not to open the box $i$ if and only if 
\begin{equation*}
y\geq -c_{i}+E\max [y,x_{i}^{o}].
\end{equation*}%
The expectation is taken over $x_{i}^{o}$ according to the
distribution $F_{i}$. The inequality is equivalent to $c_{i}\geq E\max
[0,x_{i}^{o}-y]$, whose right-hand is decreasing in $y$. So there is a
least nonnegative $y$ for which is true:
\begin{equation}
x_{i}^{\ast }=\min \{y:c_{i}\geq E\max [0,x_{i}^{o}-y],\ y\geq 0\},
\label{windex}
\end{equation}
and $x_{i}^{\ast }$ is called the \emph{reservation value} (or
reservation prize) of $%
x_{i}$.

The so-called Pandora's rule, which is optimal for Weitzman's problem,
is to first compute the reservation value of each box, as if each were
the only box, and then open boxes in descending order of these values
until a prize is found whose value exceeds the reservation value of
any unopened box.

Attractive as it is, the Weitzman model does not cover an important
and large class of applications in which the agent's utility is not
merely a function of the one prize the agent takes at the end of
search, but of all prizes uncovered. Such problems contain features of
both Weitzman's problem and the celebrated multi-armed bandit problem
as solved by Gittins and Jones \cite{Git74}. For example, a student
benefits from the courses she takes while searching for the subject to
choose as major; or people obtain a flow utility of dating with
different partners in the process of looking for a spouse; or an
institution which experiments with different forms of organization,
before adopting a more permanent form, is affected by those temporary
forms\footnote{Weitzman anticipated that Pandora's rule would not
  generalize to such problems. He wrote: ``\emph{If
    some fraction of its reward can be collected from a research
    project before the sequential search procedure as a whole is
    terminated, that could negate Pandora's rule in extreme
    cases}.''
However, he gave no supporting detailed analysis, and it turns
out to be difficult to say whether or not some interesting
generalization might be possible.}.

Motivated by this, and other applications to be described in the
following sections, we now consider a possible generalization of
Weitzman's model. Suppose that $S$ is the set of opened boxes at the
point we stop, and the vector of the prize values found is
$x_{S}^{o}=(x_{i}^{o},\ i\in S)$.  In Weitzman's problem the aim is to
maximize the expected value of
\begin{equation}
R(x_{S}^{o})=\max_{i\in S}x_{i}^{o}-\sum_{i\in S}c_{i}.  \label{eq:13}
\end{equation}%
(Strictly speaking, $R$ is a function of $S$ and $x_S^o$.  However, in
\eqref{eq:13}, and in similar definitions that depend on $(S,x_S^o)$,
it is convenient to suppress the $S$.)  Now consider a more general
reward, expressed as a utility that depends on all the prizes
discovered.
\begin{equation}
R(x_{S}^{o})=u(x_{S}^{o})-\sum_{i\in S}c_{i}.  \label{eq:17}
\end{equation}
We shall use the notation $\varnothing$ to denote both the empty set
and the empty vector; so may write $R(\varnothing)=0$.  The paper is
primarily concerned with the following question.

\paragraph{Question 1}

{\itshape For what utility functions $u$ is a simple (generalized) Pandora's
rule optimal?}

\subsection{A generalized Pandora's rule}

To answer Question~1 we must start by saying what a `generalized
Pandora's rule' might be. Suppose a set of boxes $S\subset
N=\{1,\dots ,n\}$ has been opened, and $i\not\in S$. We might ask,
{\itshape what is the smallest prize whose addition to the set of
  prizes already discovered would cause it to be optimal to stop
  rather than open box $i$ and then stop?} This defines a reservation
value (or prize) for $x_{i}$, of
\begin{equation}
x_{i}^{\ast }=\min \{y:u(x_{S}^{o},y)\geq -c_{i}+Eu(x_{S}^{o},y,x_{i}^{o}),\
y\geq 0\}  \label{eq:20}
\end{equation}%
with the expectation being taken over $x_{i}^{o}$. Notice that for
this to make sense we must assume, as we now do, that $u$ is a
function that maps a vector of any length to a real value. Note that
if $x_{S}^{o}$ is a vector of length $n-1$ then the right-hand side of
\eqref{eq:20} requires $u$ to be defined over vectors of length $n+1$.
It is straightforward to see that for Weitzman's problem
  \eqref{windex} and \eqref{eq:20} coincide, as do the following 
  definitions of Weitzman's Pandora's rule and a generalized Pandora's
  rule.

\begin{definition}[{Weitzman's Pandora's rule}]
\emph{Open the unopened box with greatest reservation value, as
  defined by \eqref{windex}, until there is no unopen box whose
  reservation value exceeds the greatest prize that has been found.}
\end{definition}
\begin{definition}[{Generalized Pandora's rule}] \emph{Open the unopened
    box with greatest reservation value, as defined by \eqref{eq:20},
    until there is no unopened box whose reservation value exceeds 0.}
  \medskip
\end{definition}

Notice that $x_i^*$ is not an index in the most usual sense. Unlike
the reservation prize \eqref{windex} in Weitzman's problem, or a
Gittins index, $x_i^*$ is a function not only of of $c_i$ and $F_i$,
but also $x_S^o$, the vector of values of prizes that have already been
uncovered. Consequently, after opening a box, the reservation values
of all the unopened boxes must be recomputed from \eqref{eq:20}.

\subsection{A generalized utility function}

We cannot hope that the generalized Pandora's rule should be optimal
unless we place some restrictions on the utility function $u$. What
minimal constraints should we impose on $u$ to obtain a nice
answer? Let us take as a guide the fact that the utility
$u(x_1,\dotsc,x_k)=\max_i x_i$ of Weitzman's problem has several
special properties, which we now group under the heading of Assumption
1 and will also wish to require subsequently.

\begin{assumption}\ \\[-16pt] {\itshape
    \begin{itemize}
\item $u(\varnothing)=0$ and $u(0,x_2,\dotsc,x_k)=u(x_2,\dotsc,x_k)$.

\item $u$ is continuous, nonnegative, symmetric, nondecreasing and
submodular in its arguments;
\end{itemize}}
\end{assumption}
By `symmetric' we mean that the value of $u$ for any $k$-tuple of arguments
is the same as its value for any permutation of that $k$-tuple. So utility
depends only the set of prizes found, not on the order in which they are
found. By `submodular', we mean that for any vectors $x$ and $y$ of the same
length 
\begin{equation*}
u(x)+u(y)\geq u(x\wedge y)+u(x\vee y)
\end{equation*}
where $x\wedge y$ and $x\vee y$ denote the minimum and maximum of $x$
and $y$ taken component-wise. An equivalent statement is that the
increase in $u(x)$ obtained by increasing one component of $x$ becomes
no greater as any other component becomes greater. That is, for any
$x_S^o$, $\xu_1<\overline{x}_1$ and $\xu_2<%
\overline{x}_2$,

\begin{equation*}
u(x_{S}^{o},\overline{x}_{1},\overline{x}_{2})-u(x_{S}^{o},\underline{x}_{1},%
\overline{x}_{2})\leq u(x_{S}^{o},\overline{x}_{1},\underline{x}%
_{2})-u(x_{S}^{o},\underline{x}_{1},\underline{x}_{2})\text{.}
\end{equation*}

We shall assume throughout the rest of the paper that Assumption 1 holds. It is
helpful to remember that $u$ is symmetric when reading some of
the expressions below.

There is a further property of \eqref{eq:13}, which we now state as 
distinct from Assumption 1.

\begin{assumption} {\itshape The benefit of increasing a component of
    $x$ from $0$ to some positive value $x_{i}^{o}$ is independent of
    the values of other components of $x$ which are greater than
    $x_{i}^{o}$. That is, for $x_{S}^{o}$ and any $x_{i}\leq
    \underline{x}_{j}<\overline{x}_{j}$, with $i,j\not\in S$,
\begin{equation}
u(x_{S}^{o},\overline{x}_{j},x_{i})-u(x_{S}^{o},\overline{x}%
_{j},0)=u(x_{S}^{o},\underline{x}_{j},x_{i})-u(x_{S}^{o},\underline{x}%
_{j},0).  \label{eq:15}
\end{equation}}
\end{assumption}

At first sight this assumption appears very strong, but we shall see
that if a generalized Pandora's rule is to be optimal for all choices
of $c_{i}$ and $F_{i}$, then it is necessary.\smallskip

Consider now the following special case of utility. Since it plays an
important role in our paper we define a name for it.  In the proof of
Theorem 2 below, we see that this form of $u$ poses a problem that is
equivalent to a multi-armed bandit problem.

\begin{definition}\label{defSPR}
  A utility function $u$ is said to be `Strongly Pandora's Rule
  compatible' (SPR) if
  \begin{equation}
    u(x_{S}^{o})=u(\max_{i\in S}x_{i}^{o})-f(\max_{i\in S}x_{i}^{o})+\sum_{i\in
      S}f(x_{i}^{o})  \label{eq:18}
  \end{equation}%
  where $f$ denotes some function, and then $u,f,u-f$ are all
  nonnegative and nondecreasing functions, and $u(0)=f(0)=0$.
\end{definition} 
It is straightforward to check that if $u$ is \SPR then it satisfies
Assumptions 1 and 2.  Moreover, if $u$ is \SPR then the generalized
reservation value is equal to Weitzman's reservation value.  To see
this, we note that the reservation value of $x_i$ is the least $y$
such that
\begin{align}
  u(x_{S}^{o},y,0)& \geq -c_i +
  Eu(x_{S}^{o},y,x_i^o),
\end{align}
or equivalently, with $x_1=\max_{j\in S}x_j^o$, the least $y$ such that
\begin{equation}
  \begin{array}{lll}
u(x_1)&\geq -c_i+E \max\{u(x_1)+f(x_i^o),u(x_i^o)+f(x_1)\},&\quad  y\leq x_1,\\[8pt]
 u(y)&\geq -c_i+E \max\{u(y)+f(x_i^o),u(x_i^o)+f(y)\},&\quad  y>x_1.\\
  \end{array}
\label{eq:21}
\end{equation}
or, in a form that we wish to compare later to \eqref{eq:9},
\begin{equation}
  \label{eq:8}
  u(y)-f(y)\geq -c_i +Ef(x_i^o)+ E \max\{u(y)-f(y),u(x_i^o)-f(x_i^o)\},\quad  y>x_1.
\end{equation}
In the special case of Weitzman's problem $f=0$ and $u(x)=x$, and then
the second line in \eqref{eq:21} (or \eqref{eq:8}) reduces to $y\geq
-c_i+E\max\{x_i^o,y\}$, which agrees with the calculation of
Weitzman's reservation prize value in \eqref{windex}.


Throughout the paper we assume Assumption 1 holds. In Section~2, we
show if the generalized Pandora's rule is to be optimal for all
possible choices of $((c_{i},F_{i}),\ i\in N)$ then the utility $u$
must be \SPR.  That is, if $u$ satisfies Assumption 1, but is not
\SPR, then there exist some costs $(c_{i},\ i\in N)$ and prize
distributions $(F_{i},\ i\in N) $ such that Pandora's rule is not
optimal.

In Theorem~\ref{thm:2} of Section~3 we present a converse to the
above: namely, that if $u$ is \SPR then Pandora's rule is optimal
for all $((c_{i},F_{i}),\ i\in N)$.  We prove Theorem~\ref{thm:2} by
recasting the problem as an equivalent multi-armed bandit problem and
applying the Gittins index theorem.  The connection between the
multi-armed bandit problem and Weitzman's problem has been previously
noticed by Chad and Smith \cite{Chade06} who remarked that
``Weitzman's method is a nice application of Gittins' solution of the
bandit problem''.  However they focused upon a problem of simultaneous
search, rather than Weitzman's sequential search, and did not actually
explain how the solution to Weitzman's problem can be obtained from
the Gittins index theorem.

Of course, the results of Section 2 leave open the possibility that
there might exist interesting problems in which $u$ is not SPR
compatible, and yet Pandora's rule is optimal for some, but not
all, $((c_{i},F_{i}),\ i\in N)$.  In Theorem~\ref{thm:3} of Section 4
we establish that this happens if Assumptions 1, 2 and a further
Assumption 3 (which we also denote as `ORD') are all
satisfied. Theorem 3 is stronger than Theorem 2 because if $u$
is \SPR then ORD is satisfied. Theorem 3 cannot be proved by applying
the Gittins index theorem {or by some adaptation of Weiztman's proof}.
We now conclude this introduction by describing a problem of this type
to which Theorem~\ref{thm:3} provides the optimal strategy.

\paragraph*{Example 1.}
Suppose that each $F_{i}$ is the two-point distribution such that $%
x_{i}^{0}=0$ or $x_{i}^{0}=1$ with given probabilities $q_{i}$ and $%
p_{i}=1-q_{i}$, respectively. One has in mind that each box may or may
not contain a prize, but all prizes are the same. Let $\psi $ be a
concave increasing function of the total value of prizes found and
consider the objective function
\begin{equation}
R(x_{S}^{o})=\psi \left( \textstyle\sum_{i\in S}x_{i}^{o}\right) -\textstyle%
\sum_{i\in S}c_{i}.  \label{eq:19}
\end{equation}%
This $u=\psi $ does not obey Assumptions 1 and 2, so by Theorem 1 
Pandora's rule is not optimal for all $c_{i}$ and $F_{i}$. However,
for the form of $F_{i}$ given, we will see that it is easy to check
that the sufficient conditions of Theorem~\ref{thm:3} are met, and so
we may conclude that Pandora's rule is optimal. The rule takes the
following form. Suppose $c_{1}/p_{1}\leq \cdots \leq
c_{n}/p_{n}$. Then one should open boxes in the order $1,2,\dotsc $,
but stop when we are about to open some box $j$, have thus far found
$k\leq j-1$ prizes, and
\begin{equation*}
\psi (k)\geq -c_{i}+p_{i}\,\psi (k+1)+q_{i}\,\psi (k),
\end{equation*}%
or equivalently, 
\begin{equation*}
c_{i}/p_{i}\geq \psi (k+1)-\psi (k).
\end{equation*}%
While it would be possible to guess and establish this fact by a fairly
short tailored proof, using induction on $n$, the sufficient conditions
provided by Theorem~\ref{thm:3} are quick to check.

\section{Necessary conditions for strong Pandora's rule optimality}

We state some preliminary lemmas, whose proofs are in Appendices A and
B.
\begin{lemma}
  Suppose the utility $u$ satisfies Assumption 1 and Pandora's rule
  maximizes expected utility for all costs $c_{i}$ and distributions
  $F_{i}$. Then $u$ also satisfies Assumption 2.
\end{lemma}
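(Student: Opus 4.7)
The plan is to argue by contraposition. Suppose $u$ satisfies Assumption~1 but Assumption~2 fails at some tuple $(x_S^o, x_i, \underline{x}_j, \overline{x}_j)$ with $x_i \leq \underline{x}_j < \overline{x}_j$. The submodularity clause of Assumption~1 forces
\begin{equation*}
u(x_S^o,\overline{x}_j,x_i)-u(x_S^o,\overline{x}_j,0) \leq u(x_S^o,\underline{x}_j,x_i)-u(x_S^o,\underline{x}_j,0),
\end{equation*}
so the only way \eqref{eq:15} can fail is with strict inequality. Write $\overline{A}:=u(x_S^o,\overline{x}_j,x_i)-u(x_S^o,\overline{x}_j)$ and $\underline{A}:=u(x_S^o,\underline{x}_j,x_i)-u(x_S^o,\underline{x}_j)$; we have $\overline{A}<\underline{A}$.

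I then build a two-box instance with state $x_S^o$ already realised (which can be embedded by prepending zero-cost deterministic auxiliary boxes whose outcomes compose $x_S^o$). Box $j$ has two-point distribution with $\Pr(x_j^o=\overline{x}_j)=q$ and $\Pr(x_j^o=\underline{x}_j)=1-q$, where $q\in(0,1)$; box $i$ is deterministic with value $x_i$. Choose $c_i\in(\overline{A},\underline{A})$, a non-empty open interval by the strict failure above, and pick $c_j$ in a window (see below) so that Pandora's rule at state $x_S^o$ opens box $i$ first and subsequently opens box $j$. Call this Pandora's strategy~A, and compare it with the alternative strategy~B: open $j$ first, and open $i$ if and only if $j$ reveals $\underline{x}_j$. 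A direct calculation, using only the symmetry of $u$ to cancel $u(x_S^o,\underline{x}_j,x_i)$ against $u(x_S^o,x_i,\underline{x}_j)$, gives the clean identity
\begin{equation*}
V_B - V_A \;=\; q\bigl(c_i-\overline{A}\bigr),
\end{equation*}
which is strictly positive by the choices $c_i>\overline{A}$ and $q>0$. Hence Pandora's rule is strictly dominated by strategy~B on this instance, contradicting the hypothesis that Pandora's rule is optimal for all $(c_i,F_i)$.

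The main technical hurdle is showing the admissible window for $c_j$ is non-empty. Pandora opens $i$ before $j$ precisely when $c_j>T^*:=E[u(x_S^o,x_i^*,x_j^o)]-u(x_S^o,x_i^*)$, where $x_i^*$ is the generalized reservation value of box $i$ at state $x_S^o$; after $i$ reveals $x_i$, Pandora continues to open $j$ precisely when $c_j<T_1:=E[u(x_S^o,x_i,x_j^o)]-u(x_S^o,x_i)$. The choice $c_i<\underline{A}$ forces $x_i^*>\underline{x}_j\geq x_i$, so submodularity gives $T^*\leq T_1$; the strict inequality needed to make $(T^*,T_1)$ non-empty is secured by exploiting the freedom to choose $q$ and to perturb the triple $(x_i,\underline{x}_j,\overline{x}_j)$ within the open region where Assumption~2 strictly fails, so that (by continuity of $u$) the strict submodularity inherited from $\overline{A}<\underline{A}$ transfers to the two-dimensional grid between $x_i$ and $x_i^*$ that controls $T_1-T^*$.
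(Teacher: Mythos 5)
The core identity is correct: expanding $V_A$ and $V_B$, the $c_j$-terms and the $(1-q)c_i$-terms cancel against each other, symmetry of $u$ cancels $u(x_S^o,\underline{x}_j,x_i)$ against $u(x_S^o,x_i,\underline{x}_j)$, and one is left with $V_B - V_A = q\bigl(c_i - \overline{A}\bigr)$, which is indeed $>0$ under your choices. So the payoff comparison is sound. Your route is also genuinely different from the paper's: the paper makes both remaining boxes degenerate, with deterministic prizes $\underline{x}_k^o$ and $\overline{x}_k^o$, and \emph{pins down} $c_j$ and $c_k$ to specific marginal values that make $x_k^*\leq x_j^*$ a direct submodularity consequence and the payoff comparison an algebraic identity; you instead make one box a genuine two-point lottery and try to place $c_j$ in an interval $(T^*,T_1)$, which requires that interval to be non-degenerate.

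That last step is where there is a real gap. You correctly observe that $T^*\leq T_1$ follows from submodularity once $x_i^*>x_i$, but Assumption~1 does not preclude $T^*=T_1$: the marginal increment from adding $x_j^o$ could be constant in the second argument across all of $[x_i,x_i^*]$, and the violation of (\ref{eq:15}) at $(x_i,\underline{x}_j,\overline{x}_j)$ is strict submodularity in a \emph{different} direction (vary the small argument, hold the large one), which does not automatically transfer to the direction you need (vary the background level between $x_i$ and $x_i^*$, hold $x_j^o$). Your closing sentence asserts that continuity and perturbation secure this but supplies no mechanism. What does rescue the argument — and you should make it explicit — is that the clean identity extends to the boundary case: if $T^*=T_1$, take $c_j$ slightly above $T_1$; then Pandora opens $i$ and stops, and the same algebra gives $V_B-V_{A'}=q(c_i-\overline A)-(c_j-T_1)>0$ for $c_j$ close enough to $T_1$. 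Without that (or an argument that $T^*<T_1$ is forced), the proof is incomplete. Separately, since the hypothesis is optimality from the initial empty state, you should state (or justify by the principle of optimality) why it is legitimate to reason about optimality conditional on $x_S^o$; the paper silently uses the same reduction, but your ``prepend zero-cost deterministic auxiliary boxes'' device does not quite work as stated, because such a box need not have infinite reservation value (e.g.\ under $u=\max$ its reservation value is just its own value), so Pandora's rule would not necessarily open it first.
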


\begin{lemma}
  Suppose the utility $u$ satisfies Assumptions 1 and 2.  For any
  $(x_S^o:\,i\in S)$, let $x_{\ell}$ denote the $\ell$th greatest
  element. Then, 

  (a) there exist functions $f_{\ell}:\mathbb{R}\rightarrow
  \mathbb{R}$, $\ell=1,2,\dotsc$, such that for any $x_S^o$ we have
\begin{equation}
u(x_{S}^{o})=\dsum\limits_{\ell=1}^{|S|}f_{\ell}(x_{\ell}),  \label{4}
\end{equation}

(b) $f_{\ell}(x)$ is (weakly) increasing in $x$ and
(weakly) decreasing in $\ell$,\medskip

(c) $f_{\ell}(x)-f_{\ell+1}(x)$ is weakly increasing in $x$.
\end{lemma}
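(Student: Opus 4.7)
My plan is to establish the decomposition in (a) by induction on $|S|$, and then obtain (b) and (c) as consequences of (a) combined with the monotonicity and submodularity of $u$. Throughout I use the shorthand $u_\ell(x) := u(\underbrace{x,\dotsc,x}_{\ell})$ (with $u_0\equiv 0$) and work with the candidate $f_\ell(x) := u_\ell(x) - u_{\ell-1}(x)$, so that in particular $f_1(x) = u(x)$.

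For (a), order the arguments so that $x_1\ge\cdots\ge x_k$. The heart of the argument is that the marginal gain $u(x_1,\dotsc,x_{k-1},x_k) - u(x_1,\dotsc,x_{k-1},0)$ equals $f_k(x_k)$: that is, it depends on $x_1,\dotsc,x_{k-1}$ only through the fact that each is at least $x_k$. Assumption 2, with $x_k$ as the inserted value and some $x_j$ ($j<k$) as the spectator, lets me lower $x_j$ down to $x_k$ without changing the gain; applying this in turn for $j=1,\dotsc,k-1$ reduces the gain to $u_k(x_k) - u_{k-1}(x_k) = f_k(x_k)$, after invoking Assumption 1 to drop the trailing $0$. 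Combined with $u(x_1,\dotsc,x_{k-1},0) = u(x_1,\dotsc,x_{k-1})$ and the inductive hypothesis applied to the first $k-1$ (still decreasing) entries, this yields (a).

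For (b), monotonicity of $f_\ell$ in $x$ is immediate from (a): for $\underline{x}\le\overline{x}$ the decomposition gives $f_\ell(\overline{x}) - f_\ell(\underline{x}) = u_\ell(\overline{x}) - u(\underbrace{\overline{x},\dotsc,\overline{x}}_{\ell-1},\underline{x}) \ge 0$ by monotonicity of $u$. For the decrease in $\ell$, apply submodularity to $y$ equal to $\ell$ copies of $x$ followed by $0$ (length $\ell+1$), and $z$ obtained from $y$ by swapping its last two entries. Both $y$ and $z$ are permutations of the same multiset, so by symmetry and Assumption 1 each has $u$-value $u_\ell(x)$, while $y\vee z$ is $\ell+1$ copies of $x$ and $y\wedge z$ contains $\ell-1$ copies of $x$ and two $0$'s. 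Submodularity then gives $2u_\ell(x) \ge u_{\ell+1}(x) + u_{\ell-1}(x)$, i.e.\ $f_\ell(x) \ge f_{\ell+1}(x)$.

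Part (c) uses the same submodularity trick on an asymmetric pair. Take $y$ to be $\ell$ copies of $\overline{x}$ followed by $\underline{x}$, and $z = (0,\overline{x},\dotsc,\overline{x})$ with $\ell$ copies of $\overline{x}$ (both length $\ell+1$, with $\underline{x}\le\overline{x}$). Then $y\vee z$ is $\ell+1$ copies of $\overline{x}$, and $y\wedge z$ consists of $\ell-1$ copies of $\overline{x}$, one $\underline{x}$, and one $0$. Expanding all four $u$-values via the decomposition in (a) and rearranging collapses submodularity to $f_\ell(\overline{x}) - f_\ell(\underline{x}) \ge f_{\ell+1}(\overline{x}) - f_{\ell+1}(\underline{x})$, which is (c). I expect the main obstacle to be in (a), where one has to order the Assumption-2 reductions carefully so that every intermediate spectator still satisfies the required inequality with $x_k$ at the moment it is reduced; once (a) is in hand, parts (b) and (c) are simply a matter of spotting the right pair of vectors on which to invoke submodularity.
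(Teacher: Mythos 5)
Your proposal is correct and follows essentially the same route as the paper: the same definition $f_\ell(x)=u_\ell(x)-u_{\ell-1}(x)$, the same telescoping-plus-Assumption-2 reduction of the spectators down to $x_k$ for part (a), and submodularity for (b) and (c). The only (cosmetic) difference is that for (b) and (c) you invoke submodularity once on a single well-chosen pair of vectors, whereas the paper reaches the same inequalities via a short chain of Assumption-2 equalities followed by monotonicity/submodularity; both are valid.
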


The main theorem of this section now follows. The fact that it
concerns conditions which are necessary if Pandora's rule is to be
optimal for all costs $c_{i}$ and distributions $F_{i}$ is signaled by
the word `strong' that appears in the title to this
section. Similarly, `strong' appears in the title to Section 3, but
not in the title to Section 4, which concerns sufficient conditions,
depending upon specific $c_i$, $F_i$, for which Pandora rule is
optimal.

\setcounter{theorem}{0}
\begin{theorem}\label{thm:1}
  Suppose the utility $u$ satisfies Assumption 1, and Pandora's rule
  maximizes expected utility for all costs $c_{i}$ and distributions
  $F_{i}$.  Then necessarily $u$ must be \SPR
  (Definition~\ref{defSPR}).  
\end{theorem}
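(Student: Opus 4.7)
My plan is to prove the theorem by contrapositive: I assume $u$ satisfies Assumption~1 but is not SPR, and exhibit costs $(c_i)$ and distributions $(F_i)$ under which Pandora's rule fails to be optimal. The starting point is Lemmas~1 and~2. Lemma~1 upgrades Assumption~1 to Assumption~1 plus Assumption~2, and Lemma~2 then gives the additive decomposition $u(x_S^o)=\sum_{\ell=1}^{|S|}f_\ell(x_\ell)$, with $x_\ell$ the $\ell$th largest argument and the $f_\ell$'s nonnegative, weakly increasing in $x$, weakly decreasing in $\ell$, and with $f_\ell-f_{\ell+1}$ weakly increasing in $x$. Comparing this decomposition to Definition~\ref{defSPR}, and recalling that $f_1(x)=u(x)$, one checks directly that $u$ is SPR if and only if $f_\ell(x)=f_2(x)=:f(x)$ for every $\ell\ge 2$ and every $x\ge 0$. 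Hence denying SPR yields some $\ell_0\ge 2$ and $x_0>0$ with $g_{\ell_0}(x_0):=f_{\ell_0}(x_0)-f_{\ell_0+1}(x_0)>0$, and the goal is reduced to producing a counterexample from this data.

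Given such $\ell_0$ and $x_0$, I build a test instance with $n=\ell_0+1$ boxes. Boxes $1,\dots,\ell_0-1$ carry deterministic prizes $v_1>v_2>\cdots>v_{\ell_0-1}$, all strictly larger than $x_0$, and have negligible costs, so both Pandora's rule and the optimal policy open them first (the order among these being immaterial). Conditional on this prefix, the residual sub-problem on the remaining two boxes is one in which their eventual contributions to $u$ are routed through $f_{\ell_0}$ and $f_{\ell_0+1}$ only, because every future prize, being below $v_{\ell_0-1}$, enters the sorted sum at rank $\ell_0$ or $\ell_0+1$. The last two boxes are then equipped with two-point distributions supported on $\{0,x_0\}$ together with costs $c_{\ell_0},c_{\ell_0+1}$ and success probabilities $p_{\ell_0},p_{\ell_0+1}$. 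These four parameters are tuned so that Pandora's rule, driven by \eqref{eq:20}, prescribes a different order or stopping decision for the last two boxes from the one maximizing expected utility on the residual sub-problem.

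The main obstacle is the quantitative parameter tuning. The reservation value in \eqref{eq:20} compresses the current state into a single scalar $y$, so Pandora cannot react to the event that determines whether the second remaining box will eventually contribute through $f_{\ell_0}$ or through $f_{\ell_0+1}$, whereas the truly optimal marginal analysis must weigh exactly that rank-shift. I plan to expose this blind spot by writing out, term by term, the inequalities defining $x_{\ell_0}^*$ and $x_{\ell_0+1}^*$ alongside the direct expected-payoff computation for each candidate residual strategy (open both remaining boxes, open only one, or stop at once). The thresholds separating Pandora's choice from the true optimum coincide precisely when $g_{\ell_0}(x_0)=0$, so whenever $g_{\ell_0}(x_0)>0$ there is a nondegenerate open window of $(c_{\ell_0},c_{\ell_0+1},p_{\ell_0},p_{\ell_0+1})$ in which Pandora strictly loses an amount proportional to $g_{\ell_0}(x_0)$, contradicting its assumed global optimality and completing the contrapositive.
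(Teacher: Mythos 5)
Your high-level plan --- prove the contrapositive, invoke Lemmas~1 and~2 to obtain the decomposition $u(x_S^o)=\sum_{\ell}f_\ell(x_\ell)$, observe that \SPR is exactly the condition $f_2=f_3=\cdots$, reduce to an index $\ell_0\ge 2$ and a point $x_0$ with $f_{\ell_0}(x_0)>f_{\ell_0+1}(x_0)$, and use a prefix of high-valued, negligible-cost deterministic boxes to pin the remaining prizes to ranks $\ell_0$ and $\ell_0+1$ --- matches the paper's strategy up to that point. The fatal gap is the size of the residual instance: you place only \emph{two} active boxes after the prefix, and no two-box instance can defeat Pandora's rule under Assumptions~1 and~2 alone. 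To see why, run the induction that proves Theorem~\ref{thm:3} with exactly two covered variables remaining: the base case (one box) is trivial, the identification of the optimal continuation after opening the first of the two boxes needs only Assumption~1, and the cell-by-cell payoff comparison needs only Assumptions~1 and~2 together with the definition of reservation values. Assumption~3 (ORD) is invoked in that proof only to control the order of the continuation when two or more boxes are still covered, which is vacuous with a single box left. Hence Pandora's rule is already optimal on every two-box residual problem, and the ``nondegenerate open window of $(c_{\ell_0},c_{\ell_0+1},p_{\ell_0},p_{\ell_0+1})$ in which Pandora strictly loses'' that your plan requires does not exist.

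The paper's Appendix~C uses \emph{three} active boxes in both case~(i) and case~(ii), and its mechanism for producing a contradiction is also different from what you envision. Rather than forcing Pandora to misorder two finite reservation values, the paper chooses costs so small relative to the prize that the initial reservation values of all three boxes are $+\infty$: Pandora's rule is then indifferent among the first openings, so if it were optimal, \emph{every} tie-break would have to attain the maximal expected payoff. The constructions show that distinct tie-breaks yield distinct payoffs (in case~(i) via carefully interleaved costs, in case~(ii) via the asymmetric term $(c_k/p_k)p_ip_jp_k$), which is the contradiction. Your diagnosis of the ``blind spot'' is also off: $x_i^*$ as defined in~\eqref{eq:20} is not a compression of state to a scalar, since it depends on the full vector $x_S^o$; what it cannot do is look more than one step ahead, and that myopia only bites once a third box is in play --- which is precisely why the third box is indispensable and your two-box construction cannot close the argument.
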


\paragraph{Remark.} Note that $u(x_1,x_1)=u(x_1)+f(x_1)\implies
f(x)=u(x,x)-u(x)$.  As in Lemma 2, we let $x_1\geq x_2\geq\cdots\geq
x_{|S|}$ denote the ordered $(x_i^o:i\in S)$.  Then we may write
\eqref{eq:18}, as
  \begin{align}
    u(x_{S}^{o})&=u(x_1)+\dsum\limits_{\ell=2}^{|S|}f(x_{\ell}),\label{5b}
  \end{align}
where $f(x)=u(x,x)-u(x)$. 

\begin{proof}[Part proof of Theorem 1]
  We give in Appendix C the proof that $u$ must satisfy \eqref{eq:18}.
  For now, we prove that if \eqref{eq:18} holds then the subsequent
  statements within Definition 3 are true.  Clearly, $u(0)=f(0)=0$,
  and $u,f$ are nonnegative. The other facts are proved as follows.

\begin{itemize}
\item $f(x)$ is nondecreasing in $x$, since  for $x<x'$
  \begin{align*}
f(x)=    u(x,x)-u(x)&= u(x,x')-u(x')\leq u(x',x')-u(x'),
\end{align*}
where the equality is by Assumption 2, and the inequality is by Assumption 1.
\item $u- f$ in nonnegative since by an application of Assumption 1 (submodularity)
\begin{align*}
u(x)-f(x)&=u(x)-[u(x,x)-u(x,0)]\geq u(x)-[u(0,x)-u(0,0)]=0.
\end{align*}
\item $u(x)-f(x)$ is nondecreasing in $x$, since for $x<x'$
  \begin{align*}
    u(x)-f(x)&=u(x)-[u(x,x)-u(x)]\\
&\leq u(x)-[u(x',x)-u(x')]\\
&\leq u(x')-[u(x',x')-u(x')]\\
&=u(x')-f(x'),
  \end{align*}
  where the first and second inequalities follow from Assumption 1,
  by submodularity and monotonicity, respectively.\qedhere
\end{itemize}
\end{proof}

\section{Sufficient conditions for strong Pandora's rule optimality}

Theorem 2 is a converse to Theorem 1.

\begin{theorem}\label{thm:2}
If utility $u$ is \SPR then, for all costs $c_i$
and distributions $F_i$, Pandora's rule is optimal.
\end{theorem}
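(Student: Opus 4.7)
The plan is to recast the SPR search problem as an equivalent multi-armed bandit with a retirement option and then apply the Gittins index theorem. Setting $g := u - f$, Theorem 1 tells us that both $f$ and $g$ are nonnegative, nondecreasing, and vanish at $0$. Since $g$ is nondecreasing, the SPR form \eqref{eq:18} simplifies to
\[
u(x_S^o) = g\bigl(\max_{i \in S} x_i^o\bigr) + \sum_{i \in S} f(x_i^o),
\]
so the expected payoff of any strategy that opens exactly $S$ decomposes as $E\,g(\max_{i \in S} x_i^o) + \sum_{i \in S} E[f(x_i^o) - c_i]$: an additive flow over opened boxes plus a single $g$-of-max terminal reward.

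Next, I would model each box $i$ as an arm of a two-state Markov chain: from the ``unopened'' initial state, a single activation costs $c_i$, reveals $x_i^o \sim F_i$, pays flow reward $f(x_i^o)$, and drives the arm into an absorbing ``opened'' state of value $x_i^o$. A retirement action, available at every stage, pays $g$ of the largest value uncovered so far (equal to $0$ if no arm has been activated). By construction, the expected total reward of any policy in this bandit equals the expected objective of the corresponding search strategy. Because each arm has at most one productive pull, the Gittins index of arm $i$ in its unopened state is the unique $M$ at which the agent is indifferent between activating and retiring:
\[
M = E\bigl[-c_i + f(x_i^o) + \max\{M, g(x_i^o)\}\bigr],
\]
or equivalently $c_i = E f(x_i^o) + E\max\{0, g(x_i^o) - M\}$. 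A direct comparison with \eqref{eq:8} at $y = x_i^*$ yields $M_i = g(x_i^*)$, where $x_i^*$ is the generalised reservation value \eqref{eq:20}; an already-opened arm has continuation index $g(x_i^o)$.

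The Gittins index theorem (in its retirement-option formulation) then certifies the index-greedy policy as optimal: at each step activate the arm with the greatest index, or retire if the retirement reward exceeds every index. Because $g$ is nondecreasing, the ordering of the indices $g(x_i^*)$ agrees with the ordering of the $x_i^*$'s, and the activate-versus-retire threshold becomes $\max_{i \notin S} x_i^* > \max_{j \in S} x_j^o$ for activation and the reverse for stopping. This is precisely the generalised Pandora's rule.

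The chief subtlety, and main obstacle, is that the retirement reward is not a fixed constant but evolves with the global state through the running maximum, so the most elementary form of Gittins' theorem does not literally apply. The standard remedy is to use the retirement-process (Whittle) formulation, in which each arm is independent and evaluated against the \emph{current} retirement reward; monotonicity of the running max and the absorbing structure of each opened arm together guarantee that the index calculation above remains valid at every history, and hence that the index policy is optimal throughout.
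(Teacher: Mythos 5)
Your high-level plan is the same as the paper's: decompose the SPR objective as $u(x_S^o)=g(\max_{i\in S}x_i^o)+\sum_{i\in S}f(x_i^o)$ with $g=u-f$, build an equivalent bandit in which a box pays a flow $f(x_i^o)-c_i$ on opening and the ``max'' part is collected at the end, and appeal to a Gittins index theorem; the index computation you write down is essentially the paper's equation~\eqref{eq:9}. However, the step you yourself flag as ``the chief subtlety'' is a genuine gap, not a technicality that the standard machinery takes care of. The Whittle retirement formulation you invoke proves the Gittins index theorem only for a \emph{fixed} retirement reward $M$: the index of arm $i$ is the $M$ at which one is indifferent between pulling $i$ and retiring, and optimality of the index policy is established pointwise in $M$. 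Once the retirement reward is $g(\max_{j\in S}x_j^o)$, a quantity that \emph{moves} as arms are pulled and is a joint function of the states of all arms, the arms are no longer decoupled through the retirement option and the Whittle argument does not go through as stated; ``monotonicity of the running max and the absorbing structure'' are not, on their own, a known sufficient condition in that theorem. What is needed is precisely the idea the paper uses and you do not: recast the problem as an undiscounted \emph{target process} (the ``golf with more than one ball'' model). There the final reward $g(x_i^o)=u(x_i^o)-f(x_i^o)$ is not a global retirement payoff but a third pull of arm $i$ that sends that arm into the absorbing target state and ends the game; since $g\ge 0$ is nondecreasing, the optimal third pull is on the arm with the largest revealed value, which reproduces the $g(\max)$ term, and each arm's dynamics now depend only on its own state. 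The Gittins index theorem for target processes (Gittins, Glazebrook and Weber, Chapter 7) then applies directly. Without that reformulation, or some other argument replacing it, your proof does not close.
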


\begin{proof}

  We prove this by mapping the problem to an instance of a multi-armed
  bandit problem and applying the Gittins index theorem.  In
  particular, we use the fact that the Gittins index theorem is true
  for undiscounted target processes (Gittins, Glazebrook and Weber\
  \cite{Git11}, Chapter 7). In a problem about target processes,
  reward accrues only until the first time that one of the processes
  reaches a target (or certain state). This is also the problem that
  Dumetriu, Tetali and Winkler \cite{Dum03} cutely call `playing golf
  with more than one ball', in which the aim is to minimize costs (or
  maximize reward) until one of several golf balls is first sunk in a
  hole.

  Consider a family of $n$ alternative bandits processes, each
  evolving on its own state space. We will think of the covered
  variables as bandits in state 0, and the target or hole as a certain
  state 1. Bandit $i$ starts in its initial state $0$. When it is
  continued for the first time a reward $-c_i$ accrues and the state
  makes a random transition to a new state which we denote as
  $(x_i^o,1)$, where $x_i^o$ is chosen according to distribution
  $F_i$. When the bandit is continued from this state a reward
  $f(x_i^o)$ accrues and the state makes a deterministic transition to
  $(x_i^o,2)$. When the bandit is continued from this state a reward
  $u(x_i^o)-f(x_i^o)$ accrues and the state makes a deterministic
  transition to state $1$.  As this state is the target the problem
  comes to an end. So only one of the bandits can make a third
  step. (Note that the Gittins index theorem is true for an
  uncountable state space, as we may have here.)

  It is clear that in maximizing reward it will be optimal to continue
  some set of bandits twice each, and then one of these a third time
  (the one we choose to be the one to enter state 1 and bring the
  problem to its end). If we were to continue bandits $1,\dotsc,k$
  twice, and then pick $i$ for continuation a third time, we would
  achieve reward
\[
  -(c_1+\cdots+c_k)+f(x_1^o)+ f(x_2^o)+\cdots+
  f(x_k^o)+u(x_i^o)-f(x_i^o),
  \]
  which is the same objective function as we seek to minimize when $u$
  is \SPR. Since $u-f$ is nondecreasing the process we should choose
  for continuation a third time is clearly the one having greatest
  uncovered value $x_i^o$.

  Having mapped our problem to an instance of a multi-armed bandit
  problem we can appeal to the Gittins index theorem, which says that
  expected total reward is maximized by continuing at each stage a bandit
  with the greatest Gittins index (with ties broken
  arbitrarily).  The Gittins index of bandit $i$ in this problem can
  be found by the `calibration method' which computes the Gittins
  index as the least $\lambda$ such that
  \begin{equation}
    \lambda\geq -c_i +  E\max\left\{f(x_i^o)+\lambda,
      f(x_i^o)+[u(x_i^o)-f(x_i^o]
    \right\}.\label{eq:9}
  \end{equation}
  The right hand side of \eqref{eq:9} is the maximum expected reward
  which can be obtained by continuing bandit $i$ once, and thereafter
  either continuing it once more to take reward $f(x_i^o)$ and then
  retiring with reward $\lambda$, or continuing it twice more until
  reaching state 1.  Comparing \eqref{eq:9} to \eqref{eq:8}, and
  identifying $\lambda$ with $u(y)-f(y)$ (which is nondecreasing in
  $y$) we see that the Gittins indices do indeed prescribe exactly the
  same policy as does Pandora's rule for generalized reservation
  values and a $u$ that is \SPR.
\end{proof}

\section{Sufficient conditions for Pandora's rule optimality}

We require a new
assumption.

\paragraph{Assumption 3 (ORD).}(\emph{History-Independence of the
Ordering of Reservation Values}):\\
{\itshape The ordering of reservation values $x_{k}^*$ of the covered variables
is independent of both the number of variables that have already been
uncovered and their realizations. That is, for any $S$, $x_{S}^{o}$,
and $k,j\notin S$,
\[
x_{k}^*(x_{S}^{o})\geq x_{j}^*(x_{S}^{o})\quad\iff\quad x_{k}^*(\varnothing )\geq x_{j}^*(\varnothing )\text{.\bigskip } 
\]}

We denote this property by the abbreviation ORD. Admittedly, it is
very strong assumption.  Unlike Assumptions 1 and 2, this assumption
is not simply a property of the utility function alone, but a joint
property of the utility function $u$, and the $(c_i,F_i)$. In most
problems it is easy to check whether or not ORD is satisfied. In particular,
it is satisfied if $u$ is \SPR.\smallskip

We now state the main theorem of this section. Its proof is in
Appendix B.

\begin{theorem}\label{thm:3}
  If Assumptions 1, 2, and 3 are satisfied, then the generalized
  Pandora's rule maximizes expected utility.
\end{theorem}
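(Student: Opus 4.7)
The plan is to induct on the number of unopened boxes, using ORD to fix a canonical opening order and an interchange argument for the inductive step. By ORD, relabel the boxes so that $x_1^*(\varnothing)\geq\cdots\geq x_n^*(\varnothing)$; this same ordering of generalized reservation values then persists at every subsequent history. With this relabelling, Pandora's rule always opens the unopened box of smallest index (when any is opened at all), and stops when the largest remaining reservation value is $0$. Write $V(S,x_S^o)$ for the optimal expected continuation utility and recall the Bellman equation
\[
V(S,x_S^o)=\max\!\left\{u(x_S^o),\ \max_{i\notin S}\bigl[-c_i+E\,V(S\cup\{i\},(x_S^o,x_i^o))\bigr]\right\}.
\]
The goal is to show Pandora's rule attains $V$.

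First I would prove a short monotonicity sublemma, using Lemma 2's decomposition $u(x_S^o)=\sum_\ell f_\ell(x_\ell)$ together with the fact that $f_\ell-f_{\ell+1}$ is weakly increasing: inserting a nonnegative realization into $x_S^o$ cannot raise any other unopened box's reservation value above its prior value. Granted this, I would induct on $|N\setminus S|$. At a history where every unopened box has reservation value $0$, the sublemma plus the inductive hypothesis gives that Pandora's rule on the continuation after opening any $i$ also stops; combined with the definition of $x_i^*$, this settles the stopping case. At a history where some $x_i^*>0$, the reservation-value definition yields that opening the box with the largest $x_i^*$ strictly beats stopping.

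The main obstacle is the interchange: showing that opening the top-ranked remaining box $i^*$ is at least as good as opening any other box $j$ first. The idea is to couple the two sub-strategies ``open $i^*$, then play Pandora's rule'' and ``open $j$, then play Pandora's rule'' by conditioning on the pair $(x_{i^*}^o,x_j^o)$. Because ORD guarantees the same sorted order of reservation values on either branch, the subsequent Pandora trajectories can be made to open the same further boxes in the same relative order. This reduces the comparison to evaluating the expected contribution of the two prizes $x_{i^*}^o$ and $x_j^o$, weighted according to their final ranks among all opened prizes via the monotone functions $f_\ell$; submodularity, in the form that $f_\ell-f_{\ell+1}$ is weakly increasing, then delivers the required inequality. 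The delicate point is that under the ``open $j$ first'' branch the policy may decide to stop depending on the realization of $x_j^o$, so the two coupled branches may differ in the \emph{set} of boxes ultimately opened. ORD is again the ingredient that makes these sets line up: it prevents the ordering of reservation values from being disturbed by which box is opened first, so the stop/continue decisions along the two branches can be synchronized up to the randomness in $x_{i^*}^o,x_j^o$ and the increments $f_\ell-f_{\ell+1}$ are then arranged so that placing the larger prize in the higher rank position dominates.
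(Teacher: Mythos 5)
Your overall architecture matches the paper's: induction on the number of unopened boxes, a monotonicity lemma saying that uncovering a variable cannot raise another's reservation value, a separate treatment of the all-reservation-values-zero case, and a pairwise interchange between ``open the top-ranked box $k$ first'' and ``open $j$ first,'' carried out by conditioning on the joint realization $(x_k^o,x_j^o)$. The sublemma you propose is the paper's Lemma~3 (proved there directly by applying submodularity inside the integral defining the reservation value, without routing through the $f_\ell$ decomposition). So the skeleton is right.

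The gap is in the interchange itself, which is the heart of the proof and which your sketch asserts rather than derives. Two ingredients are missing. First, you need more than ``reservation values do not increase'': you need the exact characterization that after opening $x_j$ first, the optimal continuation opens $x_k$ if and only if $x_j^o<x_k^*(x^o)$, i.e.\ $x_k^*(x^o,x_j^o)>0\iff x_j^o<x_k^*(x^o)$. This is what collapses the comparison to a $2\times 2$ table indexed by whether each of $x_j^o,x_k^o$ lies above or below the single threshold $x_k^*(x^o)$, with identical continuation payoffs when both lie below; one direction uses submodularity and the other uses the first part of Assumption~1. Second, and more seriously, the final inequality cannot be obtained by ranking the two prizes through the increments $f_\ell-f_{\ell+1}$, because the two strategies pay the costs $c_j$ and $c_k$ with different probabilities (each sometimes stops after one box), and your sketch never brings the costs into contact with the reservation values. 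The paper eliminates the costs by substituting the indifference identity $u(x^o,x_k^*)=-c_k+\int u(x^o,x_k^*,x_k^o)\,dF_k$ and the inequality $u(x^o,x_k^*)\geq -c_j+\int u(x^o,x_k^*,x_j^o)\,dF_j$; the latter is the \emph{only} place where the hypothesis $x_j^*\leq x_k^*$ enters. After these substitutions the cell-by-cell differences are exactly of the form equalized by Assumption~2 and they cancel. As written, your argument is symmetric in $j$ and $k$ --- nothing in it would fail if the roles were swapped --- which shows that the step ``submodularity then delivers the required inequality'' cannot be correct without the cost substitutions above.
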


\subsection{Application}
We conclude this section by applying Theorem~\ref{thm:3} to Example 1
introduced in Section 1, in which we presented the problem
of maximizing the expected value of
 \begin{equation}
    R(x_S^o) = \psi\left(\tsum_{i\in S}x_i^o\right)-\tsum_{i\in S}c_i
  \end{equation}
  where $\psi$ is a concave increasing function of the total value of
  prizes found. However, since we assume $x_i^o$ is either 0 or 1, we
  might also pose it as the problem of maximizing the expected value of
\[
R(x_S^o) = \tsum_{i=1}^{|S|}w_\ell x_\ell-\tsum_{i\in S}c_i
\]
where $w_i=\psi(i)-\psi(i-1)$.  Given that $k$ prizes have already
been found the reservation value $x_i^*$ is the least nonnegative $y$
such that
\[
\sum_{j=1}^{k}w_{i}+w_{k+1}y\geq -c_{i}+
\sum_{j=1}^{k}w_{i}+w_{k+1}y+p_i\Big((1-y)w_{k+1}+w_{k+2}y\Bigr)
\]
and hence
\[
x_i^*=\max \left\{ 0,\frac{w_{k+1}-c_{i}/p_{i}}{w_{k+1}-w_{k+2}}
\right\} .
\]
Thus the reservation values of the covered variables can be ranked greatest
to smallest in the order that their $c_i/p_i$ are ranked least to greatest,
independently of which other variables have been uncovered or not, and
the values taken by the uncovered variables. This means that ORD holds
and so by Theorem~\ref{thm:3} Pandora's rule is optimal.

While one could establish this fact by a hands-on and
fairly short proof, using an induction on $n$, the sufficient
conditions provided by Theorem~\ref{thm:3} are quicker to check.

\section{Conclusions}

Other researcher have also proved `negative results', similar to our
Theorem 1. For example, Banks and Sundaram \cite{Banks94} have shown
that the Gittins index cannot be extended to bandits with switching
costs. Also, Gittins, et al.\ \cite[Chapter 3]{Git11} show that the
Gittins index theorem holds only when one makes assumptions of
infinite time horizon, constant exponential discounting and that only
one bandit is continued at a time. Example 1 could be solved by a
Pandora's rule because of the simplified form of $F_i$. In other
research index policies are sometimes found to be optimal by imposing
so-called `compatibility constraints' on the parameters of the
problem.

There are other interesting Pandora box problems that remain unsolved.
For example, it would be very interesting to address a version of
Weitzman's problem in which the prizes (offers) do not remain
permanently available. When a box is opened its prize must be taken
immediately or permanently lost. This problem is unlikely to have a
simple answer, except in very special cases.

\nocite{*}

\section*{Appendix}
\appendix

In Sections A--D of this appendix we prove results described in
Section 2, showing that if Pandora's rule is to be optimal
for all choices of costs and distributions, $\{(c_i,F_i),\,i\in N\}$,
then this places very severe restrictions on the admissible form of
$u$. Section F contains the proof of Theorem~\ref{thm:3}.

\section{Proof of Lemma 1.}

We wish to show that if Assumption 1 holds, and the
  Pandora's rule maximizes expected utility for all costs $c_{i}$ and
  distributions $F_{i}$ then $u$ must satisfy Assumption 2.

  In the following proof, as in that of Theorem 1 below, we will
    use examples in which $F_i$ is a degenerate distribution. We might
    rewrite the proof using continuous random variables instead, by
    making perturbations in which absolutely continuous distributions
    approximate our degenerate ones. But to give a proof using
    degenerate distributions is both simpler and stronger. It is
    stronger since any random variable with a degenerate distribution
    can be approximated (arbitrarily closely) by a continuous random
    variable (but not vice versa). So if we show that the assumption
    that Pandora's rule is optimal for all $c_i,F_i$, with the $F_i$
    assumed `degenerate', then 
    the same is true if `degenerate' is replaced by `absolutely
    continuous'. The implication would not be true the other way
    around.

\begin{proof} 
Consider an arbitrary $S$, $x_S^o$, and $j,k\notin S$,
  with $j\neq k$, and numbers $x_j^o\leq \xu_k^o<\xo_k^o$.  Suppose
  there were a violation of Assumption 2 of the form
\[
u(x_S^o,x_j^o,\xL)-u(x_S^o,\xL)> u(x_S^o,x_j^o,\overline{x}_k^o)
-u(x_S^o,\overline{x}_k^o).
\]
Notice that we could not have the opposite strict inequality, by
Assumption 1 (submodularity). By Assumption 1, we can
increase $x_j^o$ to $\xL$ and the same inequality will hold.  This
implies that there exists $\epsilon>0$ such that
\begin{equation} 
  u(x_S^o,\xL,\xL)-u(x_S^o,\xL)> u(x_S^o,\xL,\overline{x}_k^o) -u(x_S^o,\overline{x}_k^o)+\epsilon.\label{eq:5a}
\end{equation} 

We now show that if \eqref{eq:5a} is true then Pandora's rule cannot
be optimal for all $(c_i,F_i,\ i\in N)$.  To this end, suppose $F_j$
and $F_k$ are degenerate, with $x_j^o=\xu_k^o$ and $x_k^o=\xo_k^o$
with probability 1.
Let
\begin{align*}
c_j&=u(x_S^o,\xL,\xL)-u(x_S^o,\xL)-\epsilon,\quad
c_k=u(x_{S}^{o},\xL,\overline{x}_{k}^{o})-u(x_{S}^{o},\xL).
\end{align*}
The reservation price of $x_j$ is the least nonnegative $y$ such that
\begin{equation}
  \label{eq:23}
c_j=   u(x_S^o,\xL,\xL)-u(x_S^o,\xL)-\epsilon\geq 
u(x_S^o,\xL,y) -u(x_S^o,y).
\end{equation} 
Since \eqref{eq:23}  is false for $y=\xu_k^o$ we must have  $x_j^*>\xu_k^o$.

The reservation price of $x_k$ is the least nonnegative $y$ such that
 \begin{align}
   c_k=u(x_{S}^{o},\xL,\xo_k^o)-u(x_{S}^{o},\xL)  
\geq u(x_S^o,\overline{x}_k^o,y) -u(x_S^o,y).\label{eq:16}
\end{align}
Suppose $y$ is such that \eqref{eq:23} holds, and therefore $y\geq
x_j^*>\xu_k^o$. Then \eqref{eq:16} also holds, since
\begin{align*}
 u(x_S^o,\overline{x}_k^o,y)&  -u(x_S^o,y)\\
&=u(x_S^o,\overline{x}_k^o,y)- u(x_S^o,\xu_k^o,y) +
  u(x_S^o,\xu_k^o,y)-u(x_S^o,y)\\
&\leq
u(x_S^o,\xo_k^o,y)-u(x_S^o,\xu_k^o,y)+u(x_S^o,\xL,\xL)-u(x_S^o,\xL)-\epsilon\\
&=c_k+[u(x_S^o,\xo_k^o,y)-u(x_S^o,\xu_k^o,y)]-[u(x_S^o,\xL,\xo_k^o)-u(x_{S}^{o},\xL,\xu_{k}^{o})]-\epsilon\\
&\leq c_k,
\end{align*}
where the first inequality is by \eqref{eq:23} and the second
inequality is by using Assumption 1 (submodularity) to see that since
$y\geq\xu_k^o$ the first square-bracketed term is no greater than the
second.

From this it follows that $x_k^*\leq x_j^*$.  Thus, according to
Pandora's rule, it would be optimal to next uncover $x_j$.

However, the payoff obtained by uncovering $x_k$ first and then
stopping search is strictly greater than the payoff obtained by
uncovering $x_j$ first and then stopping search if
\begin{equation}
  \label{eq:6a}
  u(x_{S}^{o},\xo_{k}^{o})-c_{k}>u(x_{S}^{o},\xL)-c_{j}.
\end{equation}
On substituting for $c_j$ and $c_k$ we find that \eqref{eq:6a} is the same as
(\ref{eq:5a}).

Note also that the right-hand side of \eqref{eq:6a} is nonnegative since
\begin{align*}
  u(x_{S}^{o},\xL)-c_{j}
&=u(x_{S}^{o},\xL)-[u(x_{S}^{o},\xL,\xL)-u(x_{S}^{o},\xL)]+\epsilon\\
&\geq u(x_{S}^{o},\xL)-[u(x_{S}^{o},\xL,0)-u(x_{S}^{o},0)]+\epsilon\\
&=u(x_{S}^{o})+\epsilon,
\end{align*}
where the inequality is by Assumption 1 (submodularity).  The payoff obtained
by uncovering $x_k$ first and then stopping search is also strictly
greater than the payoff of uncovering both if
\begin{equation}
  \label{eq:7a}
u(x_{S}^{o},\overline{x}_{k}^{o})-c_k>
u(x_{S}^{o},\xL,\overline{x}_{k}^{o})-c_{j}-c_{k}.
\end{equation}
On substituting for $c_j$ we find that \eqref{eq:7a} is also the same
as (\ref{eq:5a}).

Assume the parameters of all uncovered variables,  apart
from $x_j$ and $x_k$, are such that they should certainly stay
covered. For example, each such uncovered $x_i$ might have $x_i^o=0$ and
$c_i>0$.
We have argued that uncovering $x_k$ first and then stopping search is
strictly better than uncovering $x_j$ first and then either stopping
search or uncovering $x_k$.  As $x_k^*\leq x_j^*$, Pandora's rule
dictates that it is optimal to uncover variable $x_{j}$
first. As this is false, we must conclude that if Pandora's rule
is optimal then \eqref{eq:5a} must be false, and thus there can be no
violation to Assumption 2.
\end{proof}

\section{Proof of Lemma 2.}

\begin{proof}
 Define 
\begin{align*}
g_{\ell}(x)&=u(\underbrace{x,\dotsc,x}_{\ell\text{ times}})\\
f_{\ell}(x)&=g_{\ell}(x)-g_{\ell-1}(x)
\end{align*}
Then for $S=\{1,\dotsc,k\}$ and $x_S^o=(x_i,i\in S)$,
\begin{align}
  u(x_{S}^{o})&=
u(x_{1},\dotsc,x_{k})\nonumber\\
&=u(x_{1},\dotsc,x_{k})-u(x_{1},\dotsc,x_{k-1},0)+u(x_{1},\dotsc,x_{k-1})\label{eq:10a}\\
&=u(\underbrace{x_k,\dotsc,x_{k}}_{k\text{ times}})-u(\underbrace{x_{k},\dotsc,x_{k}}_{k-1\text{ times}},0)+u(x_{1},\dotsc,x_{k-1})\label{eq:10}\\
&=g_{k}(x_{k})-g_{k-1}(x_{k})+u(x_{1},\dotsc,x_{k-1})\nonumber\\
&=f_{k}(x_{k})+u(x_{1},\dotsc,x_{k-1})\nonumber\\
&=\dsum\limits_{\ell=1}^{k}f_{\ell}(x_{\ell}).\nonumber
\end{align} 
where \eqref{eq:10} follows from \eqref{eq:10a} by repeated
application of Assumption 2.  This proves (a).

For (b), the fact that $f_{\ell}(x)$ is a decreasing function of
$\ell$ follows from Assumption 1 (submodularity). The fact that
$f_{\ell}(x)$ is increasing in $x$ can be seen by taking
$x< x'$, and observing that
\[
g_{\ell}(x)-g_{\ell-1}(x)
=u(x,\underbrace{x',\dotsc,x'}_{\ell-1\text{ times}})
-u(\underbrace{x',\dotsc,x'}_{\ell-1\text{ times}}) 
\leq  g_{\ell}(x')-g_{\ell-1}(x'),
\]
where the equality is by Assumption 2, and the inequality
is by Assumption 1.

Finally, for (c), we note that if $x<x'$,
  \begin{align*}
    f_\ell(x)-f_{\ell+1}(x)&
=[u(\underbrace{x,\dotsc,x}_{\ell\text{ times}})
-u(\underbrace{x,\dotsc,x}_{\ell-1\text{ times}})]
-[u(\underbrace{x,\dotsc,x}_{\ell+1\text{ times}})
-u(\underbrace{x,\dotsc,x}_{\ell\text{ times}})]\\
&=[u(\underbrace{x',\dotsc,x'}_{\ell-1\text{ times}},x)
-u(\underbrace{x',\dotsc,x'}_{\ell-1\text{ times}})]
-[u(\underbrace{x',\dotsc,x'}_{\ell\text{ times}},x)
-u(\underbrace{x',\dotsc,x'}_{\ell\text{ times}})]\\
&\leq [u(\underbrace{x',\dotsc,x'}_{\ell\text{ times}})
-u(\underbrace{x',\dotsc,x'}_{\ell-1\text{ times}})]
-[u(\underbrace{x',\dotsc,x'}_{\ell+1\text{ times}})
-u(\underbrace{x',\dotsc,x'}_{\ell\text{ times}})]\\
&= f_\ell(x')-f_{\ell+1}(x').
  \end{align*}
  The second line is by Assumption 2, and the third line by Assumption
  1 (submodularity).
\end{proof}

\section{Proof of Theorem ~\ref{thm:1}}

We complete the proof of Theorem 1. As in the proof of Lemma 1 it is
convenient to use degenerate distributions in constructing
counterexamples.
  \begin{proof}[Proof of Theorem~\ref{thm:1}]
{\color{black} To complete the proof begun in Section 2 it
  remains to show that $u$ has form of (6), i.e.\ that in Lemma 2 we can put $f_{2}=\cdots=f_{n}$, where we have 
established in Lemma 2 that $f_{1}\geq f_{2}\geq
    \cdots\geq f_{n}$ and $f_{2}(0)=\cdots=f_{n}(0)=0$.  
  We do this for $f_2=f_3$.
      The proof of $f_\ell(x_{0})=f_{\ell+1}(x_{0})$, $\ell>2$ follows
      by examining an instance in which the first $\ell-2$ variables to be
      uncovered are ones with $c_i=0$ (and reservation values
      $\infty$) and their uncovered values are greater than any values
      that can be  found  amongst the variables which
      remain uncovered at that point. \smallskip

      (i)  Assume $f_3\neq 0$ and that
      there exists $x_0$ such that $f_1(x_0)\geq f_2(x_0)>f_3(x_0)>
      0$.   Consider three variables, $x_1$,
      $x_2$ and $x_3$ with the same degenerate distribution, having
      $x_i^o=x_0$, with probability 1, $i=1,2,3$.  Let costs
 be chosen so
\begin{equation}
c_3=0\leq c_1 < f_3(x_0)<c_2<f_2(x_0).
\label{7aa}
\end{equation}
We proceed to show the generalized Pandora's rule cannot be optimal.

Firstly, it follows from \eqref{7aa} that for all $y$ we have
$u(y)<-c_i+Eu(y,x_i^o)$.  Hence initially, when $S=\varnothing$, all
three variables have reservation value $\infty$.}
{\color{black}
  So if Pandora's rule is optimal then it must be optimal to uncover
  any of them first.
  Suppose $x_2$ is uncovered first, and then $x_{3}$ (which still has
  reservation value $\infty$). It is now strictly
  best to uncover $x_1$ iff
\[
f_1(x_0)+f_2(x_0) < -c_1 + f_1(x_0)+f_2(x_0)+f_3(x_0)
\]
which is true because $c_1< f_3(x_0)$. The payoff is that
of uncovering all three variables.

Alternatively, if we uncover $x_1$ first, followed by $x_3$, it is now
strictly best not to uncover $x_2$, since $c_2> f_3(x_0)$.
The difference between the expected payoffs of the strategy which
uncovers $x_1,x_3$ and of that which uncovers $x_2,x_3,x_1$ is
\begin{align*}
  & \Bigl[-c_1-c_3+f_1(x_0)+f_2(x_0) \Bigr]
  -\Bigl[-c_1-c_3-c_2+f_1(x_0)+f_2(x_0)+f_3(x_0)\Bigr]
  =c_2-f_3(x_0),
\end{align*}
which is positive, whereas if Pandora's rule were optimal this
difference should be no greater than 0.

(ii) Now consider the special case in which $f_3=0$ and $x_0$ is such
that $f_1(x_0)\geq f_2(x_0)>f_2(x_0)=0$. Suppose $x_i$ is a variable
such that $x_i^o$ is equal to $0$ or $x_0$ with probabilities
$q_i=1-p_i$ and $p_i$.
  Consider the class of variables like this, for varying
$p_i$ and $c_i$. All have initial reservation value $\infty$. Suppose
a variable in this class is uncovered and reveals value
$x_0$. Subsequent to this, the reservation value of another variable
in the class is now the least $y$, with $y\leq x_0$, such that
\[
f_1(x_0)+f_2(y)\geq
-c_i+f_1(x_0)+p_if_2(x_0)+(1-p_i)f_2(y)
\]
i.e.\ the least $y$ such that $f_2(y)\geq -c_i/p_i+f_2(x_0)$. Suppose
$c_i/p_i$ is chosen just a but less than $f_2(x_0)$, in such a way that
the reservation value is positive. Recall that
$f_2(y)$ is nondecreasing and continuous. Since $c_i/p_i$ may differ
and $f_2\neq 0$, variables in this class may now have different
reservation values. 

So suppose we start with three variables in this class. We uncover one
and it takes value $x_0$. The other two now have positive reservation
values, the greatest of which is for the variable with least value of
$c_i/p_i$.  In following  Pandora's rule we may start by uncovering
any variable initially, and then continue by uncovering variables in
increasing order of $c_\ell/p_\ell$, until either two values of $x_0$
have been revealed or all three variables have been uncovered.

If we uncover the variables in the order $x_i$, $x_j$, $x_k$ then the
expected payoff is
\begin{align*}
-c_i &+ p_i \Bigl[f_1(x_0)-c_j+p_jf_2(x_0)+q_j[-c_k+p_kf_2(x_0)]\Bigr]\\
&+q_i\Bigl[-c_j+p_j[f_1(x_0)-c_k+p_kf_2(x_0)]+q_j[-c_k+p_kf_1(x_0)]\Bigr]\\
&=(c_k/p_k)p_ip_jp_k+\sigma,
\end{align*}
where $\sigma$ is an expression that is symmetric in $i,j,k$. So if
$c_i/p_i<c_j/p_j<c_k/p_k<f_2(x_0)$ then it is strictly better to begin
by uncovering $x_i$ or $x_j$, than to begin by uncovering $x_k$. Thus
optimality of  Pandora's rule is incompatible with
$f_2(x_0)>f_3(x_0)=0$.  \qedhere}
\end{proof}


\section{A special case of Theorem~\ref{thm:1}}  

Some readers might consider the proof of Theorem 1 to be slightly
unsatisfactory because it refers to variables whose reservation value
is $\infty$. We conjecture, but have not been able to prove that
Theorem 1 is true even if we restrict attention to variables with
finite reservation values, at least under some mild restrictions on
utility $u$.

We can, however, prove Theorem 1 for the special utility described in
Theorem 4. The proof has similarities to the proof of Theorem 1, and
is omitted.

\begin{theorem}
  Suppose $w_1\geq w_2\geq\cdots$ are given, and for any
  $x_S^o$ the utility is
\[
u(x_S^o)=\sum_{i=1}^{|S|}x_{i}w_i,
\]
where $x_1\geq x_2\geq \cdots\geq x_{|S|}$ are the ordered values of
the components of $x_S^o=(x_i^o:\, i\in S)$.  Then

(a) $u(\cdot)$ satisfies Assumptions 1 and 2;\medskip

(b) if Pandora's rule is optimal for all
$\{(c_i,F_i),\,i\in N\}$ then necessarily
$w_2=w_3=\cdots$.
\end{theorem}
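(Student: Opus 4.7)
The plan is to verify (a) by direct inspection and to prove (b) by contradiction, adapting the counterexample construction used in the proof of Theorem~\ref{thm:1}.

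For (a), most of Assumption 1 is immediate. Symmetry holds by construction; continuity, monotonicity and nonnegativity follow from $w_i\geq 0$ and the fact that sorted order statistics are continuous and coordinate-wise nondecreasing. The identity $u(0,x_2,\dotsc,x_k)=u(x_2,\dotsc,x_k)$ holds because the added $0$ takes the last position of the sorted list and contributes $0\cdot w_k=0$. The only property requiring a short argument is submodularity, which I would establish through the representation $u(x)=\int_0^\infty W(N(x,t))\,dt$, where $N(x,t)=|\{i:x_i\geq t\}|$ and $W(k)=w_1+\cdots+w_k$. Since $W$ is concave (the $w_i$ are nonincreasing) and $N(x\vee y,t)+N(x\wedge y,t)=N(x,t)+N(y,t)$, concavity yields $W(N(x,t))+W(N(y,t))\geq W(N(x\vee y,t))+W(N(x\wedge y,t))$ pointwise in $t$; integrating gives submodularity. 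For Assumption 2, I would note that when $x_i\leq \overline{x}_j$, the rank of $x_i$ in the sorted order of $(x_S^o,\overline{x}_j,x_i)$ depends only on the number of components of $x_S^o$ strictly exceeding $x_i$ (plus one for $\overline{x}_j$), independently of the value of $\overline{x}_j$; and the ranks of the components below $x_i$ shift uniformly by one when $x_i$ is replaced by $0$. The two differences in \eqref{eq:15} thus coincide.

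For (b), the plan is to argue by contradiction: suppose some $j\geq 2$ satisfies $w_j>w_{j+1}$, while Pandora's rule is optimal for every $(c_i,F_i)$. I would first reduce to the case $j=2$ by introducing $j-2$ filler variables, each with $c=0$ and a degenerate distribution at a constant $M$ much larger than any other prize in the construction. Because opening a filler with zero cost strictly raises $u$, its generalized reservation value is $+\infty$ in every intermediate state, so Pandora's rule must open all fillers first, revealing each as $M$. Once the $M$'s permanently occupy the top $j-2$ positions of the sorted list, the effective utility on the remaining non-filler values is a weighted sum with weights $(w_{j-1},w_j,w_{j+1},\dotsc)$, so the inequality $w_j>w_{j+1}$ becomes ``$w_2'>w_3'$'' in the reduced problem; WLOG $j=2$.

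With $w_2>w_3$, I mimic the two-case split of the proof of Theorem~\ref{thm:1}. If $w_3>0$, take three variables each degenerate at $x_0>0$, with costs $c_3=0\leq c_1<x_0 w_3<c_2<x_0 w_2$. All three have initial reservation value $+\infty$, so Pandora's rule permits opening $x_2$ first; after that $x_3$ also has reservation $+\infty$, and then $x_1$ has strictly positive reservation since $c_1<x_0 w_3$, giving payoff $-c_1-c_2+x_0(w_1+w_2+w_3)$. The alternative ``open $x_1$, then $x_3$, then stop'' is also permitted by Pandora's rule (after $x_1,x_3$ the reservation value of $x_2$ equals $0$ because $c_2>x_0 w_3$) and yields $-c_1+x_0(w_1+w_2)$, strictly larger by $c_2-x_0 w_3>0$, contradicting optimality. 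If instead $w_3=0$, use three two-point distributions $x_i^o\in\{0,x_0\}$ with $P(x_0)=p_i$ and costs satisfying $c_1/p_1<c_2/p_2<c_3/p_3<x_0 w_2$. All initial reservation values are again $+\infty$. A direct expansion of the probability tree, using the fact that a second revealed $x_0$ kills all further openings because $w_\ell=0$ for $\ell\geq 3$, gives the expected payoff of opening in order $(x_i,x_j,x_k)$ as $(c_k/p_k)\,p_i p_j p_k+\sigma$, with $\sigma$ symmetric in $(i,j,k)$. Since $p_i p_j p_k>0$, the payoff is strictly maximized by placing the largest $c/p$ last, whereas Pandora's rule (with some tie-breaking) may place it first, contradicting optimality.

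The main obstacle is the $w_3=0$ subcase, where the $(c_k/p_k)p_i p_j p_k+\sigma$ formula must be obtained by enumerating the branches of the probability tree and checking that Pandora's stopping rule on each branch agrees with the strategy whose payoff is being evaluated. The filler reduction is easier but merits an explicit check: the marginal benefit of revealing $M$ is at least $M w_1$ minus a quantity bounded in terms of the already-uncovered prizes, so for $M$ large it stays strictly positive at every intermediate state, keeping the filler reservation value at $+\infty$ throughout.
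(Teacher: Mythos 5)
The paper itself omits the proof of Theorem~4, saying only that it ``has similarities to the proof of Theorem~1''; so there is no proof in the text to compare yours against directly. Your proposal does adapt the Theorem~1 argument, specializing $f_\ell(x)$ to $w_\ell x$ in cases $w_3>0$ and $w_3=0$ and reusing the filler reduction to get from an arbitrary $j$ down to $j=2$. The verification of part~(a) via the representation $u(x)=\int_0^\infty W(N(x,t))\,dt$ with $W$ concave is a clean self-contained route to submodularity that the paper does not take (it works through Lemma~2 instead), and the cases (i) and (ii) of part~(b), including the cost constraints $c_3=0\le c_1<x_0w_3<c_2<x_0w_2$ and the $(c_k/p_k)p_ip_jp_k+\sigma$ identity, check out. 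One point worth flagging: the whole purpose of Section~D in the paper is that the omitted Theorem~4 proof avoids variables whose reservation value is $+\infty$, whereas your construction (degenerate $F_i$'s, zero-cost fillers) relies on such variables throughout; your proof establishes the theorem as stated but does not deliver the refinement the authors hint at.

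There is, however, a genuine flaw in your justification of the filler reduction. You argue that the marginal benefit of revealing a filler's value $M$ is ``at least $M w_1$ minus a quantity bounded in terms of the already-uncovered prizes, so for $M$ large it stays strictly positive.'' This reasoning does not hold up: when fillers are opened one after another, the already-uncovered prizes are themselves copies of $M$, so the subtracted quantity grows proportionally with $M$ and cannot be dominated by taking $M$ large. Concretely, if $m$ fillers have already been opened and one evaluates the next filler's reservation value at a reservation prize $y\le M$, the marginal benefit of the new $M$ is
\[
M w_{m+1} - y\,(w_{m+1}-w_{m+2})\ \ge\ M\,w_{m+2},
\]
with the same lower bound $Mw_{m+2}$ when $y>M$. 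What keeps this strictly positive is not the size of $M$ but the structural fact that for $m\le j-3$ one has $m+2\le j-1$, so $w_{m+2}\ge w_{j-1}\ge w_j>w_{j+1}\ge 0$, hence $w_{m+2}>0$. The conclusion you want (filler reservation values stay $+\infty$ while the fillers are being opened, forcing the reduction to effective weights $w_{j-1},w_j,w_{j+1},\dots$) is correct, but you should replace the ``$M$ large'' argument with this observation about the weights.
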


\section{Proof of Theorem~\ref{thm:3}}

The proof of Theorem~\ref{thm:3} is by induction with respect to the
number of remaining, uncovered variables. Fix a set of already
uncovered variables $x^{o}$, and suppose that Pandora's rule
applies to cases in which there are fewer remaining variables. We
compare the payoff of uncovering first the variable $k$ with the
greatest reservation value $x_{k}^*$, followed by some (possibly
suboptimal) strategy, which will be specified later, to the payoff of
uncovering first a variable $\ell$, followed by an optimal
continuation strategy, and we will show that the former payoff is no
less than the latter payoff.

We need the following property of reservation values.
\begin{lemma}\label{lem:4}
  The reservation value $x_{k}^*(x^{o})$ is a (weakly) decreasing
  function of the number of uncovered variables. That is, for
  any $x^{o}=x_{S}^{o}$, $x_{j}^{o}$ and $x_{k}$ such that $j,k\notin
  S$ and $j\neq k$,%
  \[
  x_{k}^*(x^{o})\geq x_{k}^*(x^{o},x_{j}^{o})\text{.\medskip }
  \]
\end{lemma}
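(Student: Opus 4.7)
\textbf{Proof plan for Lemma~\ref{lem:4}.} The plan is to recast the definition of the reservation value in terms of a single function and then apply submodularity (Assumption 1) twice. Given $T$ and $k\notin T$, define the expected incremental benefit of opening box $k$ when reservation prize $y$ is on the table:
\[
h(y;T)=Eu(T,y,x_k^o)-u(T,y),
\]
where the expectation is over $x_k^o\sim F_k$. By \eqref{eq:20}, the reservation value $x_k^*(T)$ is the least $y\geq 0$ such that $h(y;T)\leq c_k$. I would first observe that for each realization $x_k^o=x$, submodularity of $u$ implies that the marginal $u(T,y,x)-u(T,y)$ is weakly decreasing in $y$; taking the expectation over $x_k^o$ preserves this, so $h(\cdot;T)$ is weakly decreasing in $y$. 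This makes the "least $y$" in the definition a genuine threshold and ensures that the sets $\{y\geq 0:h(y;T)\leq c_k\}$ are upward-closed intervals.

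Next, the key step is to show that enlarging the set of already-uncovered variables (weakly) decreases $h$:
\[
h(y;x^o,x_j^o)\leq h(y;x^o).
\]
After rearranging, this inequality is
\[
E\bigl[u(x^o,x_j^o,y,x_k^o)-u(x^o,y,x_k^o)\bigr]\leq u(x^o,x_j^o,y)-u(x^o,y).
\]
For each realization of $x_k^o$, this is exactly a submodularity inequality: the marginal utility of adding the component $x_j^o$ is (weakly) smaller once the additional component $x_k^o$ has been added. Taking expectations over $x_k^o$ preserves the inequality. This is where Assumption 1 (submodularity) is used crucially and is the only nontrivial ingredient.

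Combining the two observations, the set $\{y\geq 0:h(y;x^o,x_j^o)\leq c_k\}$ contains the set $\{y\geq 0:h(y;x^o)\leq c_k\}$, so its minimum is no larger, giving $x_k^*(x^o,x_j^o)\leq x_k^*(x^o)$, as required. The only minor obstacle is handling the degenerate case $x_k^*(x^o)=0$, which is immediate because $h(0;x^o,x_j^o)\leq h(0;x^o)\leq c_k$ forces $x_k^*(x^o,x_j^o)=0$ as well. No appeal to Assumption 2 or 3 is needed; the lemma holds under Assumption 1 alone.
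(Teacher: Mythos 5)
Your proof is correct and follows essentially the same route as the paper's: you apply Assumption 1 (submodularity) to show that the expected incremental benefit function $h(y;T)$ is pointwise weakly smaller after adjoining $x_j^o$, and then exploit the fact that $h(\cdot;T)$ is weakly decreasing in $y$ so that the reservation value is a threshold which can only move left. The paper phrases this as the leftmost intersection of a decreasing curve with the horizontal line at height $c_k$, but the substance is identical; your version is merely a bit more explicit about monotonicity in $y$ and the degenerate endpoint case.
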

\begin{proof}
By Assumption 1 (submodularity), 
  \[
  \int
  u(x^{o},x_{k}^*,x_{k}^{o})\,dF_{k}(x_{k}^{o})-u(x^{o},x_{k}^*)\geq
  \int
  u(x^{o},x_{j}^{o},x_{k}^*,x_{k}^{o})\,dF_{k}(x_{k}^{o})-u(x^{o},x_{j}^{o},x_{k}^*)
  \] 
  for every value of $x_{k}^*$. Therefore, Lemma~\ref{lem:4} follows from
  \eqref{eq:20}, defining $x_{k}^*$. Indeed, $x_{k}^*$ is
  determined by the intersection (more precisely, the point most to
  the left) of the graph of a decreasing function
  \[
  \int u(x^{o},x_{k}^*,x_{k}^{o})\,dF_{k}(x_{k}^{o})-u(x^{o},x_{k}^*)
  \]
  of variable $x_{k}^*$ with the horizontal line with intercept
  $c_{k}$ (assuming the usual convention regarding $0$ and $+\infty
  $).
\end{proof}
Equipped with Lemma 3, we can now prove Theorem~\ref{thm:3}. 

\begin{proof}[Proof of Theorem~\ref{thm:3}]
If $x_{k}^*=0$,
and so all reservation values are equal to $0$, it is an optimal strategy to
stop search without uncovering any additional variable. Indeed, since

\[
c_{k}\geq \int [u(x^{o},x_{k}^*,x_{k}^{o})-u(x^{o},x_{k}^*)]\,dF_{k}(x_{k}^{o}) 
\]
for every $x_{k}^*$,

\[
c_{k}\geq \int [u(x^{o},0,x_{k}^{o})-u(x^{o},0)]\,dF_{k}(x_{k}^{o})=\int
[u(x^{o},x_{k}^{o})-u(x^{o})]\,dF_{k}(x_{k}^{o}). 
\]

Thus, it is no worse to stop search without uncovering any variable
than to stop search after uncovering variable $x_{k}$; or analogously,
than to stop search after uncovering any other variable. However, by
Lemma~\ref{lem:4} and the inductive hypothesis, after uncovering any
other variable, for all values of that variable, it is optimal to stop
search.

In what follows, we assume that $x_{k}^*>0$. Consider the strategy of
uncovering variable $x_{\ell}$ first. We now characterize an optimal
continuation strategy. It follows from the inductive assumption and
ORD that an optimal continuation strategy is to uncover variable
$x_{k}$ if $x_{k}^*(x^{o},x_{\ell}^{o})>0$ and to stop search if
$x_{k}^*(x^{o},x_{\ell}^{o})=0$.

For any $x_{\ell}^{o}<x_{k}^*(x^{o})$,%
\[
u(x^{o},x_{\ell}^{o})\,<-c_{k}+\int
u(x^{o},x_{\ell}^{o},x_{k}^{o})\,dF_{k}(x_{k}^{o}). 
\]
Since 
\[
u(x^{o},0,x_{\ell}^{o})=u(x^{o},x_{\ell}^{o})\text{ and }%
u(x^{o},0,x_{\ell}^{o},x_{k}^{o})=u(x^{o},x_{\ell}^{o},x_{k}^{o}), 
\]
by Assumption 1 (its first part) we have that 
\[
u(x^{o},y,x_{\ell}^{o})\,<-c_{k}+\int u(x^{o},x_{\ell}^{o},y,x_{k}^{o})\,dF_{k}(x_{k}^{o})
\]
for sufficiently small $y$, which means that $x_{k}^*(x^{o},x_{\ell}^{o})>0$.

For any $x_{\ell}^{o}\geq x_{k}^*(x^{o})$,

\[
u(x^{o},x_{\ell}^{o})\,\geq -c_{k}+\int
u(x^{o},x_{\ell}^{o},x_{k}^{o})\,dF_{k}(x_{k}^{o}). 
\]
By Assumption 1 (submodularity), 
\[
u(x^{o},y,x_{\ell}^{o},x_{k}^{o})-u(x^{o},y,x_{\ell}^{o})\leq u(x^{o},x_{\ell}^{o},x_{k}^{o})-u(x^{o},x_{\ell}^{o}) 
\]
for every $y$. Thus, 
\[
u(x^{o},y,x_{\ell}^{o})\,\geq -c_{k}+\int
u(x^{o},x_{\ell}^{o},y,x_{k}^{o})\,dF_{k}(x_{k}^{o}) 
\]
for every $y$, which means that $x_{k}^*(x^{o},x_{\ell}^{o})=0$.

Thus, it is an optimal continuation strategy (after uncovering
variable $x_{\ell}$) to uncover variable $x_{k}$ if
$x_{\ell}^{o}<x_{k}^*(x^{o})$ and to stop search if $x_{\ell}^{o}\geq
x_{k}^*(x^{o})$. We compare the strategy of uncovering variable
$x_{\ell}$ first (followed by this optimal continuation strategy) to
the strategy of uncovering variable $x_{k}$ first, followed by
stopping search if the realization of this variable exceeds
$x_{k}^*(x^{o})$, and uncovering variable $x_{\ell}$ next if the
realization of variable $x_{k}$ falls below $x_{k}^*(x^{o})$.  In what
follows, we let $x_k^*=x_{k}^*(x^o)$ and assume that
$x_{k}^*<+\infty$. If $x_{k}^*=+\infty$, the two strategies
obviously yield the same payoff.

The comparison of the two strategies yields Figure 1(a), where the
vertical and horizontal lines are at the level of $x_{\ell}=x_{k}^*$
and $x_{k}=x_{k}^*$, respectively. Within each cell the upper line is
the payoff under the policy of uncovering $x_\ell$ first and the lower
line is the payoff under the policy of uncovering $x_k$ first. In the
south west cell of the diagram, both strategies yield the same
continuation payoff, which is denoted by
$U(x^{o},x_{\ell}^{o},x_{k}^{o})$. This is the greatest continuation
payoff, contingent on the realizations
$x_{\ell}^{o},x_{k}^{o}<x_{k}^*=x_{k}^*(x^{o})$.

\begin{figure}[H]
\renewcommand{\arraystretch}{1.4}
\centering
  \begin{minipage}[b]{.5\linewidth}
    \centering
     \[
\hspace*{-.8in}   \begin{array}{c|*{2}{>{\PBS\centering$}m{\tmplength}<{$}|}}
      \multicolumn{1}{c}{}&
      \multicolumn{1}{>{\PBS\centering$}m{\tmplength}<{$}}{x_\ell^o<x_k^*}&
      \multicolumn{1}{>{\PBS\centering$}m{\tmplength}<{$}}{x_\ell^o\geq x_k^*}\\[4pt]
      \cline{2-3}
{x_k^o\geq x_k^*} &   u(x^o,x_\ell^o,x_k^o)-c_\ell- c_k & u(x^o,x_\ell^o)-c_\ell \\[6pt]
                &   u(x^o,x_k^o)-c_k     & u(x^o,x_k^o)-c_k\\
      \cline{2-3}
{x_k^o< x_k^*}    &  U(x^o,x_\ell^o,x_k^o)-c_\ell-c_k &  u(x^o,x_\ell^o)-c_\ell\\[6pt]
                &  U(x^o,x_\ell^o,x_k^o)-c_k- c_\ell &
                u(x^o,x_\ell^o,x_k^o)-c_k- c_\ell\\[2pt]
      \cline{2-3}
    \end{array}
    \] {\hspace{.1in}Figure 1(a)}
  \end{minipage}\medskip\\
\end{figure}

Figure 1(b) is obtained from Figure 1(a) by deleting common terms within
upper and lower rows of each cell.
\begin{figure}[H]
\renewcommand{\arraystretch}{1.4}
\centering
  \begin{minipage}[b]{.5\linewidth}
    \centering
    \[
\hspace*{-.8in}    \begin{array}{c|*{2}{>{\PBS\centering$}m{\tmplength}<{$}|}}
      \multicolumn{1}{c}{}&
      \multicolumn{1}{>{\PBS\centering$}m{\tmplength}<{$}}{x_\ell^o<x_k^*}&
      \multicolumn{1}{>{\PBS\centering$}m{\tmplength}<{$}}{x_\ell^o\geq x_k^*}\\
      \cline{2-3}
      {x_k^o\geq x_k^*} &   u(x^o,x_\ell^o,x_k^o)-c_\ell & u(x^o,x_\ell^o)-c_\ell \\[4pt]
                        &  u(x^o,x_k^o)     & u(x^o,x_k^o)-c_k\\[4pt]
      \cline{2-3}
      {x_k^o< x_k^*} &  0 &  u(x^o,x_\ell^o)\\[4pt] 
                   &  0 &  u(x^o,x_\ell^o,x_k^o)-c_k\\[4pt]
      \cline{2-3}
    \end{array}
    \] {\hspace{.1in}Figure 1(b)}
  \end{minipage}\medskip\\
\end{figure}

Figure 1(c) is obtained from 2(b) by recalling that 
\begin{align}
u(x^{o},x_{k}^{\ast })& =-c_{k}+\int u(x^{o},x_{k}^{\ast},x_{k}^{o})\,dF_{k}(x_{k}^o)  \label{D1} \\
u(x^{o},x_{k}^{\ast })& \geq -c_{\ell }+\int u(x^{o},x_{k}^{\ast },x_{\ell}^{o})\,dF_{\ell}(x_{\ell}^o).  \label{D2}
\end{align}
Since $0<x_{k}^{\ast }<\infty $, (\ref{D1}) holds. Inequality
(\ref{D2}) holds by the definition of $x_{\ell }^{\ast }$, Assumption
1 (submodularity), and the assumption that $x_{\ell }^{\ast }<
x_{k}^{\ast } $. Figure 1(c) is obtained from 2(b) by using (\ref{D1})
to replace $-c_{k} $ in the two right-column cells and using
(\ref{D2}) to replace $-c_{\ell }$ in the two top-row cells with
something that is no less.

\begin{figure}[H]
\renewcommand{\arraystretch}{1.4}
\centering
  \begin{minipage}[b]{.5\linewidth}
    \centering
    \[
\hspace*{-.8in}    \begin{array}{c|*{2}{>{\PBS\centering$}m{\tmplength}<{$}|}}
      \multicolumn{1}{c}{}&
      \multicolumn{1}{>{\PBS\centering$}m{\tmplength}<{$}}{x_\ell^o<x_k^*}&
      \multicolumn{1}{>{\PBS\centering$}m{\tmplength}<{$}}{x_\ell^o\geq x_k^*}\\
      \cline{2-3}
      {x_k^o\geq x_k^*} &   u(x^o,x_\ell^o,x_k^o) &
      u(x^o,x_\ell^o) \\
&-u(x^o,x_\ell^o,x_k^*)+u(x^o,x_k^*)& -u(x^o,x_\ell^o,x_k^*)+u(x^o,x_k^*)\\[6pt]
      &      & u(x^o,x_k^o)\\[-8pt]
      &  \raisebox{12pt}{$u(x^o,x_k^o)$}                 & - u(x^o,x_k^o,x_k^*)+u(x^o,x_k^*)\\
      \cline{2-3}
      {x_k^o< x_k^*} &  0 &  u(x^o,x_\ell^o)\\[4pt]
                   &   &  u(x^o,x_\ell^o,x_k^o)\\[-4pt]
                   &  \raisebox{12pt}{0}  & -u(x^o,x_k^o,x_k^*)+u(x^o,x_k^*)\\
      \cline{2-3}
    \end{array}
    \] {\hspace{.1in}Figure 1(c)}
  \end{minipage}
\end{figure}

Notice finally that the entries in the top and bottom row of each cell
of Figure 1(c) are equal due to Assumption 2. Indeed, in the north west cell
\[
u(x^{o},x_{\ell}^{o},x_{k}^{o})-u(x^{o},x_{k}^{o})
=u(x^{o},x_{\ell}^{o},x_{k}^*)-u(x^{o},x_{k}^*) 
\]
because $x_{k}^{o}$, $x_{k}^*\geq
x_{\ell}^{o} $ in that cell. In  the north east cell
\[
u(x^{o},x_{\ell}^{o},x_{k}^*)-u(x^{o},x_{\ell}^{o})
=u(x^{o},x_{k}^{o},x_{k}^*)-u(x^{o},x_{k}^{o}) 
\]
because $x_{\ell}^{o}$, $x_{k}^{o}\geq x_{k}^* $ in that cell. Finally, in
the south east cell,
\[
u(x^{o},x_{k}^{o},x_{k}^*)-u(x^{o},x_{k}^{*})
=u(x^{o},x_{\ell}^{o},x_{k}^o)-u(x^{o},x_{\ell}^{o}) 
\]
because $x_{k}^*$, $x_{\ell}^{o}$ $\geq
x_{k}^{o}$ in that cell.

Thus, for any given $x^{o}$, the strategy of uncovering the variable with
the greatest $x_{k}^*$ is no worse than the strategy of uncovering any
other variable. To complete the inductive proof, we need to show
that the strategy of uncovering the variable with the greatest $x_{k}^*$
is no worse than stopping search. 

First note that by Assumption 1 (continuity), we
have in the case that $0<x_k^*<\infty$
\begin{equation}
u(x^{o},x_{k}^*)=-c_{k}+\int
u(x^{o},x_{k}^*,x_{k}^{o})\,dF_{k}(x_{k}^{o}).
\label{1}
\end{equation}
So by (\ref{1}),
\[
c_{k}=\int [u(x^{o},x_{k}^*,x_{k}^{o})-u(x^{o},x_{k}^*)]\,dF_{k}(x_{k}^{o}), 
\]
which by Assumption 1 (submodularity) yields
\[
c_{k}\leq \int [u(x^{o},x_{k}^{o})-u(x^{o})]\,dF_{k}(x_{k}^{o}). 
\]
This implies that uncovering variable $x_{k}$ is no worse than stopping
search.
\end{proof}

\end{document}